\newtheorem{theorem}{Theorem}[section]
\newenvironment{proof}{\it Proof.}{\hfill$\square$}
\newtheorem{lemma}[theorem]{Lemma}
\newtheorem{e-proposition}[theorem]{Proposition}
\newtheorem{e-definition}[theorem]{Definition\rm}
\newtheorem{theoreme}{Th\'eor\`eme}[section]
\newtheorem{corollaire}[theoreme]{Corollaire}
\def\og{\leavevmode\raise.3ex\hbox{$\scriptscriptstyle\langle\!\langle$~}}
\def\fg{\leavevmode\raise.3ex\hbox{~$\!\scriptscriptstyle\,\rangle\!\rangle$}}
\title{Inequalities for the generalized numerical radius}
\author{H. Abbas \\ \texttt{habbas@ul.edu.lb} \and S. Harb  \\ \texttt{sadeem.harb3@gmail.com} \and H. Issa  \\ \texttt{hassan.issa@mathematik.uni-goettingen.de}}
\begin{document}

%\begin{frontmatter}

% Title, authors and addresses

% use the thanksref command within \title, \author or \address for footnotes;
% use the ead command for the email address,
% and the form \ead[url] for the home page:
% \title{Title\thanksref{label1}}
% \thanks[label1]{}
% \author{Name\thanksref{label2}}
% \ead{email address}
% \ead[url]{home page}

% \address{Address\thanksref{label3}}
% \thanks[label3]{}
\selectlanguage{english}

\maketitle

% use optional labels to link authors explicitly to addresses:
% \author[label1,label2]{}
% \address[label1]{}
% \address[label2]{}
% The [label1] can be suppressed if there is only one address for all authors

\selectlanguage{english}

%\author[authorlabel2]{Author Name2}
%\ead{author.name2@email.address2}

%\address{Lebanese University Section I,  Lebanon.}
%\address[authorlabel2]{Address2}

% If you know the dates of reception, and acceptation you can put them now;
%  idem the name of the person presenting the Note

\begin{abstract}
\selectlanguage{english}
In the present paper, we  provide several inequalities for the  generalized numerical radius of  operator matrices as introduced by A. Abu-omar and F. Kittaneh  in \cite{kittaneh}. We generalize  the convexity and the log-convexity results obtained by M. Sababheh in \cite{sababheh} for the case of the numerical radius   to the case of  the generalized numerical radius.  We illustrate our work by providing a positive answer for the question addressed  in \cite{sababheh} for the convexity of a certain matrix operator function.  Moreover, and motivated by the results of A. Aldalabih and F. Kittaneh in \cite{kittaneh0} for the case of Hilbert-Schmidt numerical radius norm,  we use  some Schatten $p$-norm inequalities for partitioned $2\times 2$ block-matrices  to provide several Schatten $p$-norm numerical radius inequalities. % In the case where $N(.)$ is the operator norm $||.||$, we answer, 

\end{abstract}
%\end{frontmatter}

% now the Version française abrégée, if it exists
\selectlanguage{francais}
%\section*{Version fran\c{c}aise abr\'eg\'ee}
% Text of your Version française abrégée here.
% Note you do not need to repeat here equations that you use in the
% main text - for example 'voir (3)' is quite acceptable.

\selectlanguage{english}
% main text
\section{Introduction}
\label{S1}
It is well known that the   numerical radius  plays an important role in various fields of operator theory  and matrix analysis (cf. \cite{ando,goldberg,halmos}).  Based on some operator theory studies on Hilbert spaces, several generalizations for the concept of numerical radius have recently been introduced \cite{bo,kittaneh0,zamani}. In a paper of Abou-omarr and F. Kittaneh \cite{kittaneh0} the authors introduced the so-called generalized numerical radius. If $H$ is a separable Hilbert space  and  $N$ is any norm on the space of bounded operators $\mathcal{B}(H)$, the generalized numerical radius for $A\in\mathcal{B}(H)$, denoted by $w_{N}( A )$, is obtained via  the supremum of the norm over the real parts of all rotations of $A$ i.e.   $$w_{N}( A ) =  \displaystyle \sup_{\theta \in \mathbb{R} }  N( Re( e^{i \theta} A )).$$ 
Simple computation shows that when $N$ is the usual operator norm inherited from the inner product on $H$ then $w_N$ coincides with the usual numerical radius norm $w(\cdot)$. We refer the reader to \cite{kittaneh} and \cite{sababheh} for intermediate properties and  inequalities of the  norm $w_{N}( . )$.

\bigskip
In the present paper, we  restrict our attention to operator matrices i.e. $A\in M_n(\mathbb C)$ and we provide several inequalities for the matrix norm $w_{N}( . )$. Some results are obtained via convexity  whenever $N$ is unitarily invariant norm. Other results are restricted to the case when $N$ is the Schatten $p$-norm and are obtained via some norm inequalities of $2\times 2$ partitioned block-matrices.

\bigskip
On the one hand, we follow up the work of M. Sababheh in \cite{sababheh} for the case of the numerical radius, to establish a new Young-type inequality for $w_{N}( . )$.  Addressing to an open question proposed by the author in \cite{sababheh}  about the convexity of the function $t \mapsto w(A^{t}XA^{t})$, here $A>0$ denotes a positive definite matrix, on $\mathbb{R}$ we prove that the convexity is not only true for $w$ but remains true for $w_{N}$. On the other hand, we use the results of R. Bhatia and F. Kittaneh  in \cite{bh} to establish  upper bounds for the Schatten $p$-generalized numerical radius of a partitioned $2\times2$ block matirx  with respect to the Schatten $p$-generalized numerical radius of the diagonal part and the Schatten $p$-norm of the off-diagonal parts.

\bigskip
Below we state the first result of this note which generalizes Theorem 2.2 and Theorem 2.3 in \cite{sababheh}. 
\begin{theorem}
Let $ N(.) $ be a unitarily invariant norm on $M_n(\mathbb{C})$.	Given a positive definite matrix $ A > 0$ and an arbitrary matrix  $X \in \mathbb{M}_{n}(\mathbb{C})$. Each of the following  functions
\begin{itemize}
	\item $f(t) = w_{N}( A^{t}XA^{1 - t} + A^{1 - t}XA^{t})$ and
	\item $g(t) = w_{N}( A^{t}XA^{1 - t}).$
\end{itemize}
are  convex  over $ \mathbb{R}.$ 
\end{theorem}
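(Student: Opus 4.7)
The plan is to use the variational characterization $w_{N}(T)=\sup_{\theta\in\mathbb{R}} N(\operatorname{Re}(e^{i\theta}T))$ recalled in the introduction to reduce both claims to a single convexity lemma for unitarily invariant norms of Heinz-type expressions. Since the pointwise supremum of a family of convex functions of $t$ is convex, it suffices, for each fixed $\theta\in\mathbb{R}$, to show that the two inner functions are convex in $t$. Unfolding $\operatorname{Re}(e^{i\theta}T)=\tfrac12(e^{i\theta}T+e^{-i\theta}T^{*})$, using $(A^{t}XA^{1-t})^{*}=A^{1-t}X^{*}A^{t}$, and setting $Y_{\theta}:=e^{i\theta}X$ and $H_{\theta}:=\operatorname{Re}(e^{i\theta}X)$, a short computation gives
\begin{align*}
2\,N\bigl(\operatorname{Re}(e^{i\theta}A^{t}XA^{1-t})\bigr) &= N\bigl(A^{t}Y_{\theta}A^{1-t}+A^{1-t}Y_{\theta}^{*}A^{t}\bigr),\\
N\bigl(\operatorname{Re}(e^{i\theta}(A^{t}XA^{1-t}+A^{1-t}XA^{t}))\bigr) &= N\bigl(A^{t}H_{\theta}A^{1-t}+A^{1-t}H_{\theta}A^{t}\bigr).
\end{align*}
Since $H_{\theta}$ is Hermitian, both right-hand sides fit the single template $N(A^{t}BA^{1-t}+A^{1-t}B^{*}A^{t})$.

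The theorem therefore reduces to the following lemma: \emph{for every $A>0$, every $B\in M_{n}(\mathbb{C})$, and every unitarily invariant norm $N$, the function $\phi_{B}(t):=N(A^{t}BA^{1-t}+A^{1-t}B^{*}A^{t})$ is convex on $\mathbb{R}$.} Granted this, applying the lemma with $B=Y_{\theta}$ for $g$ and with the Hermitian $B=H_{\theta}$ for $f$ and then taking $\sup_{\theta}$ finishes the argument.

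My plan for the lemma is a complex-interpolation argument. Consider the entire operator-valued extension $F(z):=A^{z}BA^{1-z}+A^{1-z}B^{*}A^{z}$. Fix $a<b$ in $\mathbb{R}$ and $t=(1-\lambda)a+\lambda b$ with $\lambda\in(0,1)$. Via the duality $N(T)=\sup\{|\operatorname{tr}(TY^{*})|:N'(Y)\le 1\}$, select $Y$ attaining $N(F(t))=|\operatorname{tr}(F(t)Y^{*})|$ and form the scalar entire function $h(z):=\operatorname{tr}(F(z)Y^{*})$. Hadamard's three-lines theorem applied to $h$ on the strip $a\le\operatorname{Re}(z)\le b$ will then yield $|h(t)|\le N(F(a))^{1-\lambda}N(F(b))^{\lambda}$, i.e.\ log-convexity---and hence convexity---of $\phi_{B}$, provided the boundary bounds can be identified with $N(F(a))$ and $N(F(b))$.

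The main obstacle is precisely producing these sharp boundary bounds. Using that $A^{is}$ is unitary and commutes with all powers of $A$, cyclicity of the trace gives
\[
h(a+is)=\operatorname{tr}\bigl((A^{a}BA^{1-a})(A^{-is}YA^{is})^{*}\bigr)+\operatorname{tr}\bigl((A^{1-a}B^{*}A^{a})(A^{is}YA^{-is})^{*}\bigr),
\]
and the two terms would recombine into a single $\operatorname{tr}(F(a)W^{*})$ only if $Y$ commuted with $A^{2is}$ for every $s$---which generally fails. Overcoming this, either by replacing $F$ by a cleverer analytic extension that symmetrises the two unitary conjugations or by a careful H\"older-type estimate exploiting the self-adjointness of $F(t)$ to re-assemble the cross-contributions, is the technical heart of the lemma. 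Once the sharp estimate $|h(a+is)|\le N(F(a))$ and its analogue at $b$ are secured, three lines delivers the lemma, and the opening reduction closes the proof of the theorem.
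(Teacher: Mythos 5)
Your opening reduction is sound: since $w_N(T)=\sup_{\theta}N(\operatorname{Re}(e^{i\theta}T))$ and a pointwise supremum of convex functions is convex, it suffices to prove, for each fixed $\theta$, convexity of $t\mapsto N(A^{t}BA^{1-t}+A^{1-t}B^{*}A^{t})$ with $B=e^{i\theta}X$ (for $g$) and $B=\operatorname{Re}(e^{i\theta}X)$ (for $f$); your algebra identifying both inner functions with this single template is correct, and the lemma you isolate is in fact true. The problem is that you do not prove it. You yourself flag that the three-lines argument stalls at the boundary estimate, and this is not a routine detail to be deferred: on the line $\operatorname{Re}z=a$ one has $F(a+is)=A^{is}(A^{a}BA^{1-a})A^{-is}+A^{-is}(A^{1-a}B^{*}A^{a})A^{is}$, and because the two summands are conjugated by \emph{inverse} unitaries, $F(a+is)$ is not unitarily equivalent to $F(a)$; without a further idea the only available estimate is the triangle inequality, which gives $N(F(a+is))\le 2N(A^{a}BA^{1-a})$, a quantity that in general strictly exceeds $N(F(a))=2N(\operatorname{Re}(A^{a}BA^{1-a}))$. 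So the three-lines theorem as set up yields neither the claimed log-convexity nor the convexity of $\phi_{B}$; the ``technical heart'' you leave open is the entire content of the lemma. (Note also that your route, if completed, would prove log-convexity, which is strictly stronger than what the theorem asserts and is not needed.)

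The missing step can be supplied by an elementary argument, which is essentially what the paper does. The arithmetic--geometric mean inequality for unitarily invariant norms (Theorem IX.4.8 in Bhatia's book, cited in the paper) gives $2N(SYS)\le N(S^{2}Y+YS^{2})$ for $S>0$. Writing $A^{\frac{t+s}{2}}XA^{1-\frac{t+s}{2}}=A^{\frac{t-s}{2}}\bigl(A^{s}XA^{1-t}\bigr)A^{\frac{t-s}{2}}$ and using that $\operatorname{Re}(e^{i\theta}SZS)=S\operatorname{Re}(e^{i\theta}Z)S$ for Hermitian $S$, one obtains $g\bigl(\frac{t+s}{2}\bigr)\le\frac{1}{2}w_{N}\bigl(A^{t}XA^{1-t}+A^{s}XA^{1-s}\bigr)\le\frac{1}{2}g(t)+\frac{1}{2}g(s)$, i.e.\ midpoint convexity, and the identical manipulation applied to $A^{s}XA^{1-t}+A^{1-t}XA^{s}$ handles $f$; continuity then upgrades midpoint convexity to convexity. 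This is exactly the fixed-$\theta$ inequality your lemma requires, so if you wish to keep your formulation you should prove $\phi_{B}$ is midpoint convex this way rather than via interpolation; salvaging the three-lines route would require establishing the sharp boundary bound $N(F(a+is))\le N(F(a))$, which, as you observed, does not follow from unitary invariance alone.
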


\noindent

In connection to the work in \cite{kittaneh0} for the Hilbert-Schmidt generalized numerical radius norm, we use the following notation: given $p\in [1,\infty[$ we consider $N=\lVert\cdot \rVert _p$, the Schatten $p$-norm,  and we denote by $w_p$ the corresponding generalized numerical radius. We obtained the following estimation:

\begin{theorem}
	Let $ T =  \bigg[ \begin {array}{cc}
	A & B\\
	C & D\\
	\end {array} \bigg]\in M_{2n}(\mathbb C)$ then for any  $p\in [2,\infty[$ we have
\begin{equation}\label{e1}
w^p_{p}( T )\ \ \leq \ \ \frac{1}{2^{2-p}} \Big( w_{p}^{p}( A ) + w_{p}^{p}( D ) + \frac{1}{2^{p - 1}} \big[ \parallel B \parallel_{p} + \parallel C \parallel_{p} \big ]^{p} \Big). 
\end{equation}  
	Moreover, if $p\in[1,2]$ then 
	\begin{equation}\label{e2}
	 w^p_{p}( T )\ \ \leq \Big( w_{p}^{p}( A ) + w_{p}^{p}( D ) + \frac{1}{2^{p - 1}} \big[ \parallel B \parallel_{p} + \parallel C \parallel_{p} \big ]^{p} \Big).
	\end{equation}  
\end{theorem}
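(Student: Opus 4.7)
My plan starts from the definition $w_p(T) = \sup_{\theta \in \mathbb{R}} \lVert \mathrm{Re}(e^{i\theta}T) \rVert_p$ and seeks an upper bound on $\lVert \mathrm{Re}(e^{i\theta}T) \rVert_p^p$ that is independent of $\theta$. A direct block-matrix computation based on $T^* = \begin{bmatrix} A^* & C^* \\ B^* & D^* \end{bmatrix}$ gives
\[
\mathrm{Re}(e^{i\theta}T) = \begin{bmatrix} \mathrm{Re}(e^{i\theta}A) & \tfrac{1}{2}(e^{i\theta}B + e^{-i\theta}C^*) \\ \tfrac{1}{2}(e^{i\theta}C + e^{-i\theta}B^*) & \mathrm{Re}(e^{i\theta}D) \end{bmatrix},
\]
and I would split this $2\times 2$ block matrix into its diagonal and off-diagonal parts in order to apply the Bhatia--Kittaneh Schatten $p$-norm inequality from \cite{bh}.

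That inequality states, for a $2\times 2$ block matrix $M$, that $\lVert M \rVert_p^p \leq 2^{p-2}\bigl(\lVert \mathrm{diag}(M) \rVert_p^p + \lVert \mathrm{offdiag}(M) \rVert_p^p\bigr)$ when $p \geq 2$, and the same inequality without the prefactor $2^{p-2}$ when $1 \leq p \leq 2$. This is exactly what produces the coefficient $\tfrac{1}{2^{2-p}} = 2^{p-2}$ in \eqref{e1} and its disappearance in \eqref{e2}. For the diagonal piece one has $\lVert \mathrm{diag}(X,Y) \rVert_p^p = \lVert X \rVert_p^p + \lVert Y \rVert_p^p$, and applying this to $\mathrm{diag}(\mathrm{Re}(e^{i\theta}A),\mathrm{Re}(e^{i\theta}D))$ together with the defining bound $\lVert \mathrm{Re}(e^{i\theta}A) \rVert_p \leq w_p(A)$ (and similarly for $D$) yields a contribution of at most $w_p^p(A) + w_p^p(D)$.

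For the off-diagonal piece I would use the fact that the nonzero singular values of $\begin{bmatrix} 0 & X \\ Y & 0 \end{bmatrix}$ are the union of those of $X$ and $Y$, so its Schatten $p$-norm to the power $p$ equals $\lVert X_\theta \rVert_p^p + \lVert Y_\theta \rVert_p^p$, where $X_\theta,Y_\theta$ are the two off-diagonal entries above. The Schatten triangle inequality combined with $\lVert C^* \rVert_p = \lVert C \rVert_p$ gives $\lVert X_\theta \rVert_p, \lVert Y_\theta \rVert_p \leq \tfrac{1}{2}(\lVert B \rVert_p + \lVert C \rVert_p)$, whence $\lVert X_\theta \rVert_p^p + \lVert Y_\theta \rVert_p^p \leq \tfrac{1}{2^{p-1}}(\lVert B \rVert_p + \lVert C \rVert_p)^p$. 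Assembling these pieces produces a $\theta$-independent upper bound on $\lVert \mathrm{Re}(e^{i\theta}T) \rVert_p^p$ of precisely the form appearing on the right-hand sides of \eqref{e1} and \eqref{e2}; taking the supremum over $\theta$ then concludes both cases. The main obstacle is identifying and correctly invoking the appropriate Bhatia--Kittaneh inequality with the sharp constant in each regime of $p$; once that is in place, the remainder is routine use of the triangle inequality and the additive splitting of the Schatten $p$-norm over diagonal and off-diagonal blocks.
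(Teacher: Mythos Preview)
Your proposal is correct and follows essentially the same approach as the paper: compute $\mathrm{Re}(e^{i\theta}T)$ blockwise, apply the Bhatia--Kittaneh Schatten $p$-norm inequality for $2\times 2$ block matrices (with constant $2^{p-2}$ for $p\ge 2$ and constant $1$ for $1\le p\le 2$), bound the off-diagonal entry $F=\tfrac12(e^{i\theta}B+e^{-i\theta}C^*)$ via the triangle inequality, and take the supremum over $\theta$. The only cosmetic difference is that the paper invokes the Bhatia--Kittaneh bound directly in the form $\lVert M\rVert_p^p\le 2^{p-2}\sum_{i,j}\lVert M_{ij}\rVert_p^p$ rather than first splitting $M$ into its diagonal and off-diagonal parts, but since $\lVert\mathrm{diag}(M)\rVert_p^p+\lVert\mathrm{offdiag}(M)\rVert_p^p=\sum_{i,j}\lVert M_{ij}\rVert_p^p$ the two formulations yield exactly the same estimate.
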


\bigskip

We point out that a lower bound for the Schatten $p$-generalized numerical radius of an operator with respect to the Schatten $p$-norm of the operator has already been established by T. Bottazi and C. Conde in \cite{co}. Indeed, using a Clarkson inequality obtained by O. Hirzalla and F. Kittaneh in \cite{hirz} it follows directly that inequality (\ref{e1}) is bounded below by $\frac{1}{2^{p - 1}} \parallel T \parallel^p_{p}$ and (\ref{e2}) is bounded below by $\frac{1}{2} \parallel T \parallel^p_{p}$. 

\section{Sketch of the proof of Theorem 1.1}
Throughout this section, $N$ denotes a unitarily invariant norm on $M_n(\mathbb{C})$. We aim to establish the convexity in Theorem 1.1. using some type  of H\"{o}lder and Heinz inequalities. It should be mentioned that $w_N$ is  in general not unitarily invariant but rather weakly unitarily invariant. For such reason, we need to investigate such types of inequalities in  a weaker condition (cf. Ch. IV in \cite{Bhatia} for the case of unitarily invariant norms). 

 \begin{lemma}\label{3l1}
 	Let $ A, X  \in  \mathbb{M}_{n}(\mathbb{C})  $ such that $ A >0 $. Given $t\in[0,1]$, the following estimates hold
 	\begin{align}
 	& w_{N}( A^tXA^t ) \leq \ \ w^{t}_{N}( AXA ) w^{1-t}_{N}(X),\label{1}\\ 
 	& 2 w_{N}( A^{\frac{1}{2}}XA^{\frac{1}{2}} ) \leq  w_{N}( A^tXA^{1 - t} +  A^{1 - t}XA^{t} ) \leq  w_{N}( AX + XA ).\label{2}
 	%\vspace{0.15cm}
 	%\text{and}\   \
 	%& w_{N}( A^tXA^{1 - t} - A^{1 - t}XA^{t} )\ \ \leq \ \ \mid 2t - 1 \mid w_{N}( AX - XA ).
 	\end{align}
 \end{lemma}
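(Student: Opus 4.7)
The plan is to reduce both estimates to classical inequalities for the unitarily invariant norm $N$, by exploiting a single structural observation: since $A > 0$, every real power $A^{r}$ is Hermitian, and therefore taking real parts commutes with sandwiching by $A^{r}$. Concretely, setting $Y_{\theta} := \mathrm{Re}(e^{i\theta} X)$, which is Hermitian, one checks directly that
\[ \mathrm{Re}(e^{i\theta} A^{t} X A^{t}) = A^{t} Y_{\theta} A^{t}, \]
\[ \mathrm{Re}\bigl(e^{i\theta}(A^{t} X A^{1-t} + A^{1-t} X A^{t})\bigr) = A^{t} Y_{\theta} A^{1-t} + A^{1-t} Y_{\theta} A^{t}, \]
together with $\mathrm{Re}(e^{i\theta} A^{1/2} X A^{1/2}) = A^{1/2} Y_{\theta} A^{1/2}$ and $\mathrm{Re}\bigl(e^{i\theta}(AX + XA)\bigr) = A Y_{\theta} + Y_{\theta} A$. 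Since $w_{N}(B) = \sup_{\theta} N(\mathrm{Re}(e^{i\theta} B))$, it then suffices to prove the corresponding bounds for $N$ applied to these Hermitian combinations at fixed $\theta$ and take $\sup_{\theta}$ at the end.

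For \eqref{2}, the classical Heinz double inequality for unitarily invariant norms (cf.\ Bhatia, \emph{Matrix Analysis}, Ch.~IV) applied to $Y_{\theta}$ reads
\[ 2\,N(A^{1/2} Y_{\theta} A^{1/2}) \le N(A^{t} Y_{\theta} A^{1-t} + A^{1-t} Y_{\theta} A^{t}) \le N(A Y_{\theta} + Y_{\theta} A), \]
valid for any Hermitian $Y_{\theta}$, $A > 0$, and $t \in [0,1]$. Substituting the four identities above and then taking $\sup_{\theta}$ of each term (a pointwise $\theta$-inequality passes to the respective suprema) yields \eqref{2}.

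For \eqref{1}, the same reduction converts the claim into the sandwich-Heinz inequality
\[ N(A^{t} Y A^{t}) \le N(Y)^{1-t} N(A Y A)^{t}, \qquad t \in [0,1], \]
for self-adjoint $Y$ and $A > 0$. I would establish this via Hadamard's three-lines theorem applied to the entire matrix-valued function $F(z) = A^{z} Y A^{z}$ on the strip $\{0 \le \mathrm{Re}(z) \le 1\}$, passed to the norm $N$ through the trace-duality representation $N(B) = \sup\{|\mathrm{tr}(BC)| : N^{*}(C) \le 1\}$. On $\mathrm{Re}(z)=0$, the unitarity of $A^{is}$ gives $F(is)^{*} F(is) = A^{-is} Y^{2} A^{is}$, unitarily similar to $Y^{2}$, so $F(is)$ shares singular values with $Y$ and $N(F(is)) = N(Y)$ for every $s$. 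An analogous computation on $\mathrm{Re}(z)=1$ yields $N(F(1+is)) = N(A Y A)$. The three-lines bound then follows on the interior. Applying this with $Y = Y_{\theta}$, using $N(Y_{\theta}) \le w_{N}(X)$ and $N(A Y_{\theta} A) \le w_{N}(AXA)$, and taking $\sup_{\theta}$ on the left finishes \eqref{1}.

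The main obstacle is the log-convexity of $t \mapsto N(A^{t} Y A^{t})$ under an arbitrary unitarily invariant norm. The Bhatia--Davis log-convexity of $t \mapsto N(A^{t} X B^{1-t})$ is not directly applicable here, because the two $A$-exponents in $A^{t} Y A^{t}$ are equal rather than complementary. The Hadamard three-lines workaround succeeds precisely because both boundary norms $N(F(is))$ and $N(F(1+is))$ are constant in $s$, an observation that rests crucially on the unitary invariance of $N$ and the unitarity of $A^{is}$.
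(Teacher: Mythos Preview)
Your argument is correct and follows essentially the same route as the paper: both exploit the identity $\mathrm{Re}(e^{i\theta} A^{r} X A^{s}) = A^{r}\,\mathrm{Re}(e^{i\theta} X)\,A^{s}$ (valid because $A^{r},A^{s}$ are Hermitian) to reduce each estimate to its classical analogue for the unitarily invariant norm $N$, and then take $\sup_{\theta}$. The only difference is cosmetic: for \eqref{1} the paper simply quotes the H\"older-type bound $N(A^{t}YA^{t})\le N(AYA)^{t}N(Y)^{1-t}$ from Bhatia (Corollary~IV.5.4), whereas you supply a self-contained proof of that bound via Hadamard's three-lines theorem and trace duality; for \eqref{2} both proofs invoke the Heinz double inequality from the same reference.
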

\begin{proof}
As $A^t$ is Hermitian then for any $\theta \in \mathbb{R},$	it follows that $Re( e^{i \theta} A^tXA^t ) = A^t Re( e^{i \theta}X ) A^t. $ Now applying H\"{o}lder inequality for the unitarily invariant norm N (cf. for example Corollary IV.5.4 in \cite{Bhatia}), we get \begin{equation*}
N( Re( e^{i \theta} A^{t}XA^{t} )  )   \leq  N^{t}( A Re( e^{i \theta} X ) A ) N^{1 - t}(  Re( e^{i \theta} X )).
\end{equation*}
Taking the supremum over all  $\theta$ in $\mathbb{R},$ the inequality in (\ref{1}) follows. A similar proof holds for (\ref{2}) by using Corollary IV.4.9 in \cite{Bhatia}.
\end{proof}

We point out that replacing $ A $ by $ A^{2} $ and for $ t = 1 $ in (\ref{2}) we get
\begin{equation} w_{N}( AXA ) \leq \frac{1}{2} w_{N}( A^{2}X + XA^{2} ). \end{equation}
Hence, for $ t , s  \in  \mathbb{R}$ we obtain   
 \begin{align*}  f( \frac{t + s}{2} )& =  w_{N}( A^{\frac{t - s}{2}} ( A^{s}XA^{1 - t} + A^{1 - t}XA^{s} ) A^{\frac{t - s}{2}} ) \\
& \leq  \frac{1}{2}  w_{N}( A^{t - s} ( A^{s}XA^{1 - t} + A^{1 - t}XA^{s} ) + ( A^{s}XA^{1 - t} + A^{1 - t}XA^{s} ) A^{t - s} )\ \  \\ \vspace{0.1cm}
 & \leq \frac{1}{2}  w_{N}( A^{t}XA^{1 - t} + A^{1 - t}XA^{t} ) + \frac{1}{2}  w_{N}( A^{s}XA^{1 - s} + A^{1 - s}XA^{s} ) =  \frac{1}{2}  f(t) + \frac{1}{2}  f(s).
\end{align*}

We note that  the first inequality of (\ref{2}) ensures that $f$  admits a global minimum at  $ t = \frac{1}{2} $. The proof for the convexity of  $g$ follows by a similar argument. 
\section{Sketch of proof of Theorem 1.2}

Following the notation in \cite{bh},  we recall the following Schatten $p$-norm estimates of an operator to that of its $2\times2$ block matrix entries.
\begin{lemma}
Let $T=  \big[ T_{ij} \big]$, $1\leq i,j\leq 2$ be a block matrix. Then for any $p\in[2,\infty[$ the following holds

\begin{equation}\label{3}
\parallel T \parallel^p_{p}  \leq  \frac{1}{2^{\frac{2}{p}} - 1} \big( \displaystyle \sum_{i,j} \parallel T_{i,j} \parallel_{p}^{p} \big).
\end{equation}
\end{lemma}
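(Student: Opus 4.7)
The plan is to reduce the bound to a Clarkson-type inequality for the Schatten $p$-norm, applied to the decomposition of $T$ into its block-diagonal and block-off-diagonal parts. I would first introduce the unitary $V = \mathrm{diag}(I_n, -I_n)$ on $\mathbb{C}^{2n}$. A direct computation shows that $VTV^{*}$ differs from $T$ only by a sign flip of the off-diagonal blocks; hence $\tfrac{1}{2}(T + VTV^{*}) = D$ and $\tfrac{1}{2}(T - VTV^{*}) = O$, where $D = T_{11} \oplus T_{22}$ is block-diagonal and $O = \bigl(\begin{smallmatrix} 0 & T_{12} \\ T_{21} & 0 \end{smallmatrix}\bigr)$ is block-off-diagonal. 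Because $V$ is unitary, $\|VTV^{*}\|_p = \|T\|_p$.

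Next I would invoke the Schatten-class Clarkson inequality in its lower form: for $p \geq 2$ and any operators $X, Y$,
\[
\|X+Y\|_p^p + \|X-Y\|_p^p \ \geq \ 2\bigl(\|X\|_p^p + \|Y\|_p^p\bigr).
\]
Taking $X = T$ and $Y = VTV^{*}$ yields $2^p(\|D\|_p^p + \|O\|_p^p) \geq 4\|T\|_p^p$, i.e.\ $\|T\|_p^p \leq 2^{p-2}(\|D\|_p^p + \|O\|_p^p)$. Since $D$ is block-diagonal one has $\|D\|_p^p = \|T_{11}\|_p^p + \|T_{22}\|_p^p$, and after a unitary row permutation $O$ is also block-diagonal, so $\|O\|_p^p = \|T_{12}\|_p^p + \|T_{21}\|_p^p$. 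Combining these observations gives the cleaner but weaker estimate
\[
\|T\|_p^p \ \leq \ 2^{p-2}\sum_{i,j=1}^{2} \|T_{ij}\|_p^p.
\]

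The main obstacle is sharpening this constant to the asserted value $(2^{2/p}-1)^{-1}$. The two constants agree at $p = 2$, where both equal $1$, but the claimed constant is strictly smaller for every $p > 2$ (for example, $(\sqrt{2}-1)^{-1} = \sqrt{2}+1 \approx 2.414$ versus $2^{p-2} = 4$ at $p = 4$). The geometric-series identity $(2^{2/p}-1)^{-1} = \sum_{k=1}^{\infty} 2^{-2k/p}$ hints at an iterative halving scheme whose per-step gain is $2^{-2/p}$ rather than the plain factor $2^{-1}$ produced by a single application of Clarkson. Matching this per-step rate seems to require the refined Clarkson inequalities with several operators, together with the associated trace-power monotonicity $\mathrm{Tr}(A+B)^{p/2} \geq \mathrm{Tr}\,A^{p/2} + \mathrm{Tr}\,B^{p/2}$ for $p \geq 2$, developed in \cite{bh}. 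This is the step where I would follow the Bhatia--Kittaneh construction directly rather than attempt to re-derive it; the rest of the argument above is simply packaging.
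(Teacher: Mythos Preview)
The paper does not prove this lemma at all; it is simply recalled from \cite{bh} (Bhatia--Kittaneh) and then applied in the proof of Theorem~1.2. So there is no ``paper's own proof'' to compare your argument against --- the paper's entire treatment is the citation.

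Your proposal is more ambitious than the paper in that you actually derive something, namely the weaker bound $\|T\|_p^p \leq 2^{p-2}\sum_{i,j}\|T_{ij}\|_p^p$ via the Clarkson lower inequality and the unitary $V=\mathrm{diag}(I,-I)$. That part is correct and cleanly executed. But you are right that this constant is strictly worse than $(2^{2/p}-1)^{-1}$ for every $p>2$, and you explicitly stop short of closing the gap, saying you would ``follow the Bhatia--Kittaneh construction directly.'' In other words, for the stated constant your proposal ultimately reduces to the same citation the paper makes. The geometric-series heuristic you mention is suggestive but is not itself a proof; the sharp constant in \cite{bh} comes from a different route (pinching/majorization and trace-power inequalities for the $2\times 2$ block decomposition), not from iterating the Clarkson step you used. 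So as a self-contained proof of the lemma with the stated constant, your proposal has a genuine gap that you yourself identify; as a reference to the literature, it matches what the paper does.
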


Let $ T =  \bigg[ \begin {array}{cc}
A & B\\
C & D\\
\end {array} \bigg]\in M_{2n}(\mathbb{C})$, then for  any $\theta \in \mathbb{R} $ we have $ Re( e^{i \theta} T) =  \bigg[ \begin {array}{cc}
Re(e^{i \theta} A )                                   & F \\
F^\star & Re( e^{i \theta} D )                                \\
\end {array} \bigg], $
where $F=\frac{1}{2}( e^{i \theta} B + e^{- i \theta } C^* ).$ Applying (\ref{3}) we obtain  
\begin{align*}
\parallel Re( e^{i \theta} T ) \parallel^p_{p}& \leq  \frac{1}{2^{2-p}} \Big( \parallel Re( e^{i \theta } A ) \parallel_{p}^{p} + \parallel Re( e^{i \theta} D ) \parallel_{p}^{p} + 2 \parallel F \parallel_{p}^{p})\\
&\leq \ \ \frac{1}{2^{2-p}} \Big( \parallel Re( e^{i \theta } A ) \parallel_{p}^{p} + \parallel Re( e^{i \theta} D ) \parallel_{p}^{p} + \frac{1}{2^{p - 1}} \big( \parallel B \parallel_{p} + \parallel C \parallel_{p} \big)^{p} \Big)\\
&\leq  \frac{1}{2^{2-p}} \Big( w_{p}^{p}( A ) + w_{p}^{p}( D ) + \frac{1}{2^{p - 1}} \big[ \parallel B \parallel_{p} + \parallel C \parallel_{p} \big ]^{p} \Big).
\end{align*}
Taking the supremum over all $\theta \in \mathbb{R} $ inequality (\ref{e1}) is obtained.  Using the corresponding results in \cite{kittaneh0},  for the case $p\in[1,2]$, inequality (\ref{e2}) is established in a similar way.

%To obtain the left hand side we use  a Clarkson inequality which can be found in \cite{hirz}. Given   $p\in[2,\infty[,$ then for any $ T, S \in \mathbb{M}_{n} (\mathbb C)$ we have $$ 2 \big( \parallel T \parallel_{p}^{p} + \parallel S \parallel_{p}^{p} \big) \ \ \leq \ \ \parallel T + S \parallel_{p}^{p} + \parallel T - S \parallel_{p}^{p}.$$
%Replacing $ T $ by $ \frac{e^{i \theta} T }{2} $ and $ S $ by $ \frac{e^{- i \theta} T^*}{2} $ we obtain $$\frac{1}{2^{p -2}} \parallel T \parallel_{p}^{p}\leq \parallel Re(e^{i \theta} T ) \parallel_{p}^{p} +  \parallel Im( e^{i \theta } T ) \parallel_{p}^{p}\leq 2w^p_p(T).$$
%Inequality (\ref{e2}) is obtained in a similar way by considering 
 
 \section{Final remarks}
 
 In this section, we provide further results  with some applications of Theorems 1.1 and 1.2.
 
 \bigskip
Motivated by the work of Shabebh for the numerical radius, the convexity of the function $f$, together with the convexity of $\ell(t):=  w_{N}( A^{t}XA^{1 - t} - A^{1 - t}XA^{t} )$,  as defined in Theorem 1.1 provides the following   reversed inequalities for the generalized numerical radius.
\begin{corollaire}
Let $N$ be a unitarily inavariant matrix norm and let $ A > 0$. For any  $  X  \in \mathbb{M}_{n}(\mathbb C)$ the following inequalities hold
 	\begin{equation*}\label{5012} 
		\begin{cases}
		w_{N}( A^{t}XA^{1 - t} \pm A^{1 - t}XA^{t} )  \leq  w_{N}( AX \pm XA ),  &\mbox{ if } t \in [0 , 1] \\
		w_{N}( A^{t}XA^{1 - t} \pm A^{1 - t}XA^{t} ) \geq w_{N}( AX \pm XA ), &\mbox{ if  } t \notin [0 , 1] .
		\end{cases}
		\end{equation*}
		\
\end{corollaire}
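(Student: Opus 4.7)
The plan is to deduce the corollary from the convexity of $f$ (already established in Theorem 1.1) together with the convexity of $\ell$, combined with the observation that both functions take identical values at the endpoints $t=0$ and $t=1$. I would organize the argument in three steps.

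First, I would establish the convexity of $\ell(t) = w_{N}(A^{t} X A^{1-t} - A^{1-t} X A^{t})$ by repeating verbatim the midpoint-convexity argument given for $f$ in Section 2. The only point to check is that the sign flip does not spoil the algebraic identity
\begin{equation*}
A^{t-s} M + M A^{t-s} \;=\; (A^{t} X A^{1-t} - A^{1-t} X A^{t}) + (A^{s} X A^{1-s} - A^{1-s} X A^{s}),
\end{equation*}
where $M = A^{s} X A^{1-t} - A^{1-t} X A^{s}$; this is a routine expansion using $A^{a} A^{b} = A^{a+b}$ for the positive definite matrix $A$. Combining this with the inequality $w_{N}(A^{(t-s)/2} M A^{(t-s)/2}) \leq \tfrac{1}{2}\, w_{N}(A^{t-s} M + M A^{t-s})$ coming from Lemma 2.2 and the triangle inequality for $w_{N}$, one obtains $\ell(\tfrac{t+s}{2}) \leq \tfrac{1}{2}(\ell(t) + \ell(s))$, hence convexity of $\ell$ on $\mathbb{R}$ by continuity.

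Second, I would record the boundary values. Direct substitution gives $f(0) = f(1) = w_{N}(AX + XA)$. For $\ell$, substitution yields $\ell(1) = w_{N}(AX - XA)$ and $\ell(0) = w_{N}(XA - AX)$; since $N(-Z) = N(Z)$ for any norm, one has $w_{N}(-Y) = w_{N}(Y)$, and therefore $\ell(0) = \ell(1) = w_{N}(AX - XA)$. Writing $h$ for either $f$ or $\ell$, one thus has $h(0) = h(1) = c$ where $c := w_{N}(AX \pm XA)$.

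Third, the conclusion follows from a standard convexity argument exploiting the equality at the two endpoints. If $t \in [0,1]$, the convex combination $t = (1-t)\cdot 0 + t \cdot 1$ immediately yields $h(t) \leq (1-t) h(0) + t h(1) = c$. If $t > 1$, writing $1 = \tfrac{1}{t} \cdot t + (1 - \tfrac{1}{t}) \cdot 0$ and applying convexity gives $c = h(1) \leq \tfrac{1}{t} h(t) + (1 - \tfrac{1}{t}) c$, which rearranges to $c \leq h(t)$. Symmetrically, for $t < 0$, the decomposition $0 = \tfrac{1}{1-t} \cdot t + \tfrac{-t}{1-t} \cdot 1$ with $h(0) = h(1) = c$ yields $c \leq h(t)$ in the same manner. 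The main obstacle is really the first step, namely securing the convexity of $\ell$ (asserted but not proven in the paragraph preceding the corollary); once that is in hand, the two-sided bounds are an elementary consequence of the fact that a convex function with equal values at $0$ and $1$ is dominated by that common value on $[0,1]$ and dominates it outside.
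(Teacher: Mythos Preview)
Your proposal is correct and follows exactly the route the paper indicates: it uses the convexity of $f$ and of $\ell$ together with the observation $f(0)=f(1)$, $\ell(0)=\ell(1)$, then applies the elementary fact that a convex function equal at two points lies below that value between them and above it outside. You have simply written out the details the paper leaves implicit (the midpoint computation for $\ell$ and the endpoint interpolation); the only slip is the label, which should be Lemma~2.1 rather than Lemma~2.2.
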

The convexity of the function $ g$ defined in Theorem 1.1 provides the following  Young-type inequality which generalizes Corollary 2.11 in  \cite{Sababheh}.
\begin{corollaire}
Let $N$ be a unitarily inavariant matrix norm. Given $A>0 $ and  $X  \in \mathbb{M}_{n}(\mathbb C)$ then
	\begin{equation*}\label{504} 
	\begin{cases}
	w_{N}( A^{t}XA^{1 - t} )  \leq  t  .  w_{N}(AX) + (1 - t) . w_{N}( XA )   & \mbox{ if } t \in [0 , 1],\\
	w_{N}( A^{t}XA^{1 - t} ) \geq  t . w_{N}(AX) + (1 - t) . w_{N}(XA)   &  \mbox{ if } t  \not\in  [0 , 1].
	\end{cases}
	\end{equation*}
\end{corollaire}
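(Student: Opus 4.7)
The plan is to obtain both directions as a direct consequence of the convexity of $g(t) = w_N(A^t X A^{1-t})$ already established in Theorem 1.1, combined with the boundary evaluations. Since $A^0 = I$ and $A^1 = A$, we have $g(0) = w_N(XA)$ and $g(1) = w_N(AX)$, so the right-hand side of the stated inequality is precisely the chord $t \mapsto t \, g(1) + (1-t) \, g(0)$ through the graph of $g$ at the endpoints $0$ and $1$.

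For $t \in [0,1]$, the inequality is then immediate from the very definition of convexity, using $t = t \cdot 1 + (1-t) \cdot 0$ as the convex combination.

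For $t \notin [0,1]$, the strategy is to invert the role of $t$ and the endpoints: instead of sandwiching $t$ between $0$ and $1$, I would express one of the endpoints as a convex combination of $t$ and the other endpoint, so that $t$ is now an exterior point. Concretely, when $t > 1$ one writes $1 = \tfrac{1}{t}\, t + \tfrac{t-1}{t}\, 0$ and applies convexity of $g$ at the point $1$; when $t < 0$ one writes $0 = \tfrac{1}{1-t}\, t + \tfrac{-t}{1-t}\, 1$ and applies convexity of $g$ at the point $0$. In both cases solving the resulting inequality for $g(t)$ reverses the direction and produces exactly the lower bound claimed.

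No serious obstacle is expected; the substance of the corollary lies entirely in Theorem 1.1, and the extension outside $[0,1]$ is a general feature of convex functions. The only bookkeeping required is to check that the coefficients in the above convex combinations are positive and sum to one when $t < 0$ or $t > 1$, which is a direct algebraic verification. The analogous statement with $f$ and with $\pm$ signs in Corollary~4.1 can be recovered by the same chord argument once one records $f(0) = f(1) = w_N(AX + XA)$ (and similarly for $\ell$), but here the interest is only in the $g$-case.
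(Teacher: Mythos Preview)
Your proposal is correct and follows exactly the approach indicated in the paper, which simply notes that the inequality is a Young-type consequence of the convexity of $g(t)=w_N(A^tXA^{1-t})$ from Theorem~1.1. The only difference is that you spell out the standard chord argument (including the exterior-point case $t\notin[0,1]$) in more detail than the paper does.
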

 Given $A > 0,$  and  $X  \in  \mathbb{M}_{n}(\mathbb{C}),$ the author in \cite{Sababheh} proposed a question on the convexity of $t \mapsto w(A^{t}XA^{t})$ for the numerical radius and indicated that he has no answer whether this is true or not. Below, we provide a positive answer for the aforementioned questioned not only for the numerical radius but also for the generalized numerical radius whenever $N$ is unitarily invariant.
 
 \bigskip
 Indeed,  by  Theorem IX.4. 8 in \cite{Bhatia} the inequality $2N(AXB)\leq N(A^2X+XB^2)$ holds true for all positive matrices $A$ and $B$. Following a smilar argument to that in Lemma 2.1, we get $2w_N(AXB)\leq w_N(A^2X+XB^2)$. In particular, if $ h(t) = w_{N}( A^tXA^t )$ then
 $$ h(\frac{t + s}{2}) = w_N(A^{\frac{t - s}{2}} ( A^{s}XA^{t} ) A^{\frac{- t + s}{2}} )\leq \frac{1}{2}w_{N}( A^tXA^t + A^sXA^s) = \frac{1}{2} h(t) + \frac{1}{2} h(s).$$
We point out that by choosing $t=\frac{1}{2}$ in (\ref{1}), the log-convexity of $h$ is also obtained. Moreover, and by repeating a similar argument we get
 	\begin{corollaire}
 		Let $N$ be a unitarily inavariant matrix norm. Given $A>0 $ and  $X  \in \mathbb{M}_{n}(\mathbb C)$ the function \begin{equation*}
 		k(t) =  w_{N}( A^{t}XA^{1 - t} + A^{-t}XA^{1 + t} )
 		\end{equation*}
 		is convex on $\mathbb{R},$ with  minimum  attained at $ t  =  0$.
 	
 	\end{corollaire}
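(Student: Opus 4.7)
The plan is to mirror the midpoint--convexity argument that was used just above for $h(t) = w_{N}(A^{t} X A^{t})$, combined with the sandwich inequality $2 w_{N}(BYB) \leq w_{N}(B^{2} Y + Y B^{2})$ valid for any positive $B$ (this is the inequality displayed immediately after Lemma~2.1 and re-used in the convexity proof for $h$) and the triangle inequality for $w_{N}$. The entire toolkit already appears in the paper; the only new input is an algebraic identity that exposes $k(\tfrac{t+s}{2})$ as a symmetric sandwich.

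The key step is the following identity, valid for any $t, s \in \mathbb{R}$ and $u = (t+s)/2$, verified by a direct exponent check:
\begin{equation*}
A^{u} X A^{1-u} + A^{-u} X A^{1+u}
= A^{(t-s)/2}\bigl( A^{s} X A^{1-t} + A^{-t} X A^{1+s} \bigr) A^{(t-s)/2}.
\end{equation*}
With this identity in hand, I would apply the sandwich inequality with $B = A^{(t-s)/2} > 0$ and $Y = A^{s} X A^{1-t} + A^{-t} X A^{1+s}$, and expand $A^{t-s} Y + Y A^{t-s}$:
\begin{align*}
A^{t-s} Y + Y A^{t-s}
&= \bigl( A^{t} X A^{1-t} + A^{-s} X A^{1+s} \bigr) + \bigl( A^{s} X A^{1-s} + A^{-t} X A^{1+t} \bigr) \\
&= \bigl( A^{t} X A^{1-t} + A^{-t} X A^{1+t} \bigr) + \bigl( A^{s} X A^{1-s} + A^{-s} X A^{1+s} \bigr).
\end{align*}
A single triangle inequality on $w_{N}$ then yields $k(\tfrac{t+s}{2}) \leq \tfrac{1}{2}(k(t) + k(s))$. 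Since $t \mapsto A^{t}$ is norm-continuous, $k$ is continuous on $\mathbb{R}$, so midpoint convexity upgrades to full convexity.

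For the location of the minimum, the symmetry $k(-t) = k(t)$ is immediate from interchanging the two summands in the definition of $k$. Combined with the just-established convexity, it gives $k(0) \leq \tfrac{1}{2}(k(t) + k(-t)) = k(t)$ for every $t \in \mathbb{R}$, so the minimum is attained at $t = 0$.

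The only genuine obstacle is discovering the factorization in the first display; everything else is a direct analogue of the convexity proofs for $f$ and $h$ given earlier.
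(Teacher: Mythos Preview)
Your proposal is correct and follows essentially the same approach as the paper, which does not spell out a proof but simply says ``by repeating a similar argument'' to the one given for $f$ and $h$. Your factorization, application of the sandwich inequality $2w_{N}(BYB)\le w_{N}(B^{2}Y+YB^{2})$, and expansion of $A^{t-s}Y+YA^{t-s}$ match the template used for $f$ line by line, and your symmetry argument for the minimum at $t=0$ parallels the observation that $f$ has its minimum at $t=\tfrac{1}{2}$.
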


\bigskip
Adressing to the consequences of Theorem 1.2 and following a similar logic to that in \cite{kittaneh0} we obtain the following results
 	\begin{corollaire} Let $p\in[2,\infty[$ then the following estimates holds true
 		\begin{enumerate}
 			\item $w_p\bigg(\bigg[ \begin {array}{cc}
 			A & 0\\
 			0 & D\\
 			\end {array} \bigg]\bigg)\leq \frac{1}{2^{\frac{2}{p} - 1}} \Big( w_{p}^{p}( A ) + w_{p}^{p}( D ) \Big)^{\frac{1}{p}},$ and $w_p\bigg(\bigg[ \begin {array}{cc}
 			A & 0\\
 			0 & D\\
 			\end {array} \bigg]\bigg)= \big( w_{p}^{p}( A ) + w_{p}^{p}( D ) \big)^{\frac{1}{p}}$ if $A$ and $D$ are Hermitian.
 			\item $w_p\bigg(\bigg[ \begin {array}{cc}
 			A & B\\
 			0 & 0\\
 			\end {array} \bigg]\bigg)\leq  \frac{1}{2^{\frac{2}{p} - 1}} \Big( w_{p}^{p}( A ) + \frac{1}{2^{p - 1}} \parallel B \parallel_{p}^{p} \Big)^{\frac{1}{p}}.$
 		\end{enumerate}
\end{corollaire}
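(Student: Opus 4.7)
The plan is to deduce each bound as a direct specialization of inequality (\ref{e1}) in Theorem 1.2, followed by extracting a $p$-th root. For item (1) I would set $B = 0$ and $C = 0$ in (\ref{e1}); the off-diagonal contribution $\tfrac{1}{2^{p-1}}(\|B\|_p + \|C\|_p)^p$ then vanishes, yielding
\[
w_p^p\!\left(\begin{array}{cc} A & 0 \\ 0 & D \end{array}\right) \;\leq\; 2^{p-2}\bigl(w_p^p(A) + w_p^p(D)\bigr).
\]
Taking the $p$-th root and noting the identity $2^{(p-2)/p} = 2^{1 - 2/p} = 1/2^{2/p - 1}$ produces the stated bound. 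Item (2) proceeds identically by setting $C = 0$ and $D = 0$ in (\ref{e1}); the $w_p^p(D)$ term and the $\|C\|_p$ term both vanish, giving $w_p^p(T) \leq 2^{p-2}\bigl(w_p^p(A) + \tfrac{1}{2^{p-1}}\|B\|_p^p\bigr)$, and a $p$-th root finishes the argument.

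For the equality assertion in item (1) when $A$ and $D$ are Hermitian, I would first record the general fact that $w_p(H) = \|H\|_p$ for any Hermitian $H$. This follows because $\operatorname{Re}(e^{i\theta}H) = \tfrac{1}{2}(e^{i\theta}H + e^{-i\theta}H^\ast) = \cos\theta\cdot H$ when $H = H^\ast$, so $N(\operatorname{Re}(e^{i\theta}H)) = |\cos\theta|\,\|H\|_p$ and the supremum over $\theta$ equals $\|H\|_p$ (attained at $\theta = 0$). Next, the block-diagonal matrix $\operatorname{diag}(A, D)$ is itself Hermitian, so $w_p(\operatorname{diag}(A,D)) = \|\operatorname{diag}(A,D)\|_p$. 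Since the singular values of a block-diagonal matrix are simply the union (with multiplicities) of the singular values of the blocks, $\|\operatorname{diag}(A,D)\|_p^p = \|A\|_p^p + \|D\|_p^p$. Combining with the first observation gives $w_p(\operatorname{diag}(A,D)) = \bigl(w_p^p(A) + w_p^p(D)\bigr)^{1/p}$, as claimed.

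I do not anticipate any substantial obstacle: the corollary is essentially a bookkeeping consequence of Theorem 1.2. The only minor points that demand care are the arithmetic with the exponent $1/2^{2/p - 1}$ when passing between inequalities on $w_p^p$ and $w_p$, and isolating the identity $w_p(H) = \|H\|_p$ for Hermitian $H$ needed in the equality case of (1).
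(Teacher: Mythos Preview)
Your proposal is correct and matches the paper's approach: the paper simply states that the corollary follows as a consequence of Theorem~1.2 (together with the logic of \cite{kittaneh0}), and your specialization of (\ref{e1}) with the appropriate blocks set to zero, plus the direct computation $w_p(H)=\|H\|_p$ for Hermitian $H$, is exactly the intended argument. The arithmetic with the constant $2^{(p-2)/p}=1/2^{2/p-1}$ is handled correctly.
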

Using the unitarily invariance of Schatten $p$-norms together with the proof of Theorem 1.1 we get a generalization for Lemma 2 in \cite{kittaneh0}:
\begin{corollaire} Let $p\in[2,\infty[$ then the following are true
	\begin{enumerate}
		\item $w_p\bigg(\bigg[ \begin {array}{cc}
		0 & B\\
		B & 0\\
		\end {array} \bigg]\bigg)=2^\frac{1}{p}  w_{p}^{p}( B )$.
		\item $w_p\bigg(\bigg[ \begin {array}{cc}
		A & B\\
		B & A\\
		\end {array} \bigg]\bigg)\leq \frac{1}{2^{\frac{2}{p} - 1}} \Big( w_{p}^{p}( A+B ) + w_{p}^{p}( A-B ) \Big)^{\frac{1}{p}},$ and $w_p\bigg(\bigg[ \begin {array}{cc}
		A & B\\
		B & A\\
		\end {array} \bigg]\bigg)= \Big( w_{p}^{p}( A+B ) + w_{p}^{p}( A-B ) \Big)^{\frac{1}{p}}$ if $A$ and $B$ are Hermitian.
	\end{enumerate}
\end{corollaire}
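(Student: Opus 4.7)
The plan is to reduce both block matrices to block-diagonal form via the $2n\times 2n$ Hadamard-type unitary $U := \frac{1}{\sqrt{2}}\bigl[\begin{smallmatrix} I & I \\ I & -I \end{smallmatrix}\bigr]$ and then invoke the preceding corollary. The enabling observation is the weak unitary invariance of $w_p$: for any unitary $V$ one has $\mathrm{Re}(e^{i\theta}VMV^{*}) = V\,\mathrm{Re}(e^{i\theta}M)\,V^{*}$, so the unitary invariance of $\|\cdot\|_p$ yields $w_p(VMV^{*})=w_p(M)$.

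For item~(1) I would argue directly, without even invoking $U$ as a similarity. For each $\theta\in\mathbb{R}$ the matrix $H_\theta := \mathrm{Re}(e^{i\theta}B)$ is Hermitian, so
\begin{equation*}
\mathrm{Re}\left(e^{i\theta}\left[\begin{array}{cc} 0 & B \\ B & 0\end{array}\right]\right) \;=\; \left[\begin{array}{cc} 0 & H_\theta \\ H_\theta & 0 \end{array}\right].
\end{equation*}
This off-diagonal block matrix is unitarily similar (through $U$) to $\mathrm{diag}(H_\theta,-H_\theta)$, so its Schatten $p$-norm equals $2^{1/p}\|H_\theta\|_p$. Taking the supremum over $\theta$ then yields $2^{1/p} w_p(B)$, which I take as the intended reading of the stated identity.

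For item~(2), a direct computation gives $U\bigl[\begin{smallmatrix} A & B \\ B & A \end{smallmatrix}\bigr]U^{*} = \bigl[\begin{smallmatrix} A+B & 0 \\ 0 & A-B \end{smallmatrix}\bigr]$. By weak unitary invariance the left-hand $w_p$ equals that of the block-diagonal matrix on the right, to which the preceding corollary applies with $A\leftarrow A+B$ and $D\leftarrow A-B$, delivering the announced upper bound. When $A$ and $B$ are Hermitian, so are $A\pm B$, placing us in the equality clause of that corollary and producing the stated equality.

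The one point requiring genuine verification is precisely that equality clause in the Hermitian case: for Hermitian $H$ one has $\mathrm{Re}(e^{i\theta}H) = \cos\theta\,H$, so $\|\mathrm{Re}(e^{i\theta}H)\|_p = |\cos\theta|\,\|H\|_p$ attains its $\theta$-supremum at $\theta=0$. This guarantees that the suprema defining $w_p(A+B)$ and $w_p(A-B)$ are realised simultaneously at a common $\theta$, which is exactly what is needed to promote the $p$-th-power inequality to an equality.
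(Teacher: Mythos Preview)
Your proof is correct and matches the paper's (very brief) indication, which merely says the corollary follows from the unitary invariance of the Schatten $p$-norms together with the preceding results; the block-diagonalisation via $U=\tfrac{1}{\sqrt{2}}\bigl[\begin{smallmatrix} I & I\\ I & -I\end{smallmatrix}\bigr]$ and the appeal to the preceding corollary are exactly the intended mechanism. Your explicit verification that the suprema in the Hermitian equality case are attained at a common $\theta$, and your correct reading of item~(1) as $2^{1/p}w_p(B)$ rather than the stated $2^{1/p}w_p^p(B)$, are both appropriate.
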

Finally, we note that similar inequalities to those obtained in the previous two corollaries can be derived for the case $p\in[1,2]$ using (\ref{e2}). Moreover, such results  remain true for the generalized numerical radius on bounded operators acting on separable Hilbert spaces.

% etc, etc

% The Appendices part is started with the command \appendix;
% appendix sections are then done as normal sections
% \appendix

% \section{}
% \label{}

% The Acknowledgements are an un-numbered section

\end{document}